\theoremstyle{plain}
\newtheorem{thm}{Theorem}[section]
\newtheorem{prp}{Proposition}[section]
\newtheorem{lem}{Lemma}[section]
\theoremstyle{definition}
\theoremstyle{remark}
\newtheorem{rmk}{Remark}[section]
\numberwithin{equation}{section}
\newcommand{\Z}{\mathbb{Z}}
\newcommand{\R}{\mathbb{R}}
\newcommand{\Sph}{\mathbb{S}}
\newcommand{\pa}{\partial}
\newcommand{\eps}{\varepsilon}
\newcommand{\jb}[1]{\langle #1 \rangle}
\DeclareMathOperator{\supp}{\rm supp}
\begin{document}
\title{
 On Agemi-type structural conditions for a system of semilinear wave 
equations}

\author{
          Yoshinori Nishii\thanks{
              Department of Mathematics, Graduate School of Science, 
              Osaka University. 
              1-1 Machikaneyama-cho, Toyonaka, Osaka 560-0043, Japan. 
              (E-mail: {\tt y-nishii@cr.math.sci.osaka-u.ac.jp})             }
           \and  
          Hideaki Sunagawa \thanks{
              Department of Mathematics, Graduate School of Science, 
              Osaka City University. 
              3-3-138 Sugimoto, Sumiyoshi-ku, Osaka 558-8585, Japan. 
              (E-mail: {\tt sunagawa@sci.osaka-cu.ac.jp})
             }
}

\date{\today }   
\maketitle

\noindent{\bf Abstract:}\ 
We consider a two-component system of cubic semilinear wave equations 
in two space dimensions satisfying the Agemi-type structural condition (Ag) 
but violating (Ag$_0$) and (Ag$_+$). 
For this system, we show that small amplitude solutions are asymptotically 
free as $t\to +\infty$. 
\\

\noindent{\bf Key Words:}\ Semilinear wave equation; asymptotic behavior; 
Agemi-type condition.
\\

\noindent{\bf 2010 Mathematics Subject Classification:}\ 
35L71, 35B40.

\section{Introduction }  \label{sec_intro}

This paper is devoted to the study on large-time asymptotic behavior of 
solutions $u=(u_1,u_2)$ to 
\begin{align}
\left\{
\begin{array}{l}
 \Box u_1=- (\pa_t u_2)^2 \pa_t u_1,\\
 \Box u_2=- (\pa_t u_1)^2 \pa_t u_2,
 \end{array}\right.
 &\quad (t,x )\in (0,\infty)\times \R^2
\label{eq}
\end{align}
with the initial condition 
\begin{align}
u_j(0,x)=\eps f_j(x),\ \ 
\pa_t u_j(0,x)=\eps g_j(x), 
\qquad  x \in \R^2,\ j=1,2,
\label{data}
\end{align}
where $\eps>0$ is a small parameter, $\Box=\pa_t^2-\pa_{x_1}^2-\pa_{x_2}^2$, 
and $f_j$, $g_j\in C_0^{\infty}(\R^2)$. 

Before getting into the details, let us recall the backgrounds briefly 
to make clear why this system is of our interest. 
To put \eqref{eq} in perspective, let us first consider more general systems 
in the form 
\begin{align}
 \Box u=F(\pa u), 
  \qquad (t,x)\in (0,\infty)\times \R^d,
\label{eq_gene}
\end{align}
with $C_0^{\infty}$-data of size $\eps$, where $u=(u_j(t,x))_{1\le j\le N}$, 
$\pa_0=\pa/\pa t$, $\pa_k=\pa/\pa x_k$ ($1\le k\le d$), 
$\Delta=\pa_{1}^2+\cdots +\pa_{d}^2$, $\Box=\pa_t^2-\Delta$ and 
$\pa u=(\pa_{a} u_j)_{0\le a \le d; 1\le j\le N}$. 
$F=(F_j)_{1\le j\le N}$ is an $\R^N$-valued $C^{\infty}$-function vanishing 
of order $p \ge 2$ in a neighborhood of $0\in \R^{N\times (1+d)}$. 
If $p>1+2/(d-1)$ and $\eps$ is small enough, it is well-konwn that 
\eqref{eq_gene} admits a unique global $C^{\infty}$-solution and it behaves 
like a solution to the free wave equation as $t\to \infty$, 
while if $p\le 1+2/(d-1)$, global existence fails to hold in general 
even when $\eps>0$ is arbitrarily small (\cite{J0}, \cite{Go}, etc). 
In this sense, the power $p_{\rm c}(d):=1+2/(d-1)$ is a critical exponent 
for nonlinear perturbation. Note that $p_{\rm c}(2)=3$ and $p_{\rm c}(3)=2$. 
On the other hand, the small data global existence can hold for some class 
of nonlinearity of the critical power. One of the most successful example 
is the so called {\em null condition}, which has been originally 
introduced by 
Christodoulou\cite{Chr} and Klainerman\cite{Kla} in three dimensional case 
and developed later by several authors (see \cite{Go}, \cite{Hos}, 
\cite{K0}, \cite{Ali} etc., for the two-dimensional counterparts). 
We remark that the global solution $u$ under the null condition is 
asymptotically free in the sense that there exists a solution $u^+$ to 
the free wave equation $\Box u^+=0$ such that 
\[
 \lim_{t\to \infty}\|u(t)-u^+(t)\|_E=0,
\]
where the energy norm $\|\, \cdot\, \|_{E}$ is defined by 
\[
 \|\phi(t)\|_{E}^2
=\frac{1}{2} \int_{\R^d} \sum_{a=0}^{d}|\pa_a \phi(t,x)|^2\, dx.
\]
When we restrict to the case where $d=2$ and 
the nonlinearity is given by 
\begin{align}\label{F_explicit}
 F_j(\pa u)
 =  \sum_{k,l,m=1}^{N}\sum_{a,b,c=0}^{2}  C_{jklm}^{abc} 
 (\pa_a u_k)(\pa_b u_l)(\pa_c u_m)
\end{align}
with real constants $C_{jklm}^{abc}$, 
the null condition is satisfied if and only if 
$F_j^{\rm red}(\omega, Y)$ vanishes identically on $\Sph^1\times \R^N$, 
where 
\[
 F_j^{\rm red}(\omega, Y)
 =  \sum_{k,l,m=1}^{N}\sum_{a,b,c=0}^{2}  C_{jklm}^{abc} 
 \omega_a \omega_b \omega_c Y_k Y_l Y_m
\]
for $Y=(Y_j)_{1\le j \le N} \in \R^N$ 
and 
$\omega=(\omega_1,\omega_2) \in \Sph^1$, with the convention $\omega_0=-1$. 

Recently, a lot of efforts have been made for the study on weaker structural 
conditions than the null condition mentioned above 
which ensure the small data global existence
(see e.g., \cite{Lind1}, \cite{LR1}, \cite{LR2}, \cite{Ali2}, \cite{Ali3}, 
\cite{Hos2}, \cite{Kub}, \cite{KK}, \cite{KMuS}, \cite{KMatoS}, \cite{KMatsS}, 
\cite{K2}, \cite{K3}, \cite{HidYok}, etc). 
It should be emphasized that the situation becomes much more complicated 
because long-range nonlinear effects must be taken into account.
In \cite{KMatsS}, the following condition has been introduced: 
\begin{enumerate}
\item[{\bf (Ag)}] \ 
There exists an $N\times N$-matrix valued continuous function 
${\mathcal A}={\mathcal A}(\omega)$ on $\Sph^1$, which is a 
positive-definite symmetric matrix for each $\omega\in \Sph^1$, such that
\[
 Y \cdot {\mathcal A}(\omega) F^{\rm red}(\omega, Y)\ge 0,
 \quad (\omega, Y)\in \Sph^1 \times \R^N,
\]
where the symbol $\,\cdot\,$ denotes the standard inner product in 
$\R^N$.
\end{enumerate}
After the partial results \cite{Kub}, \cite{Hos2}, \cite{KMuS}, 
it has been shown in \cite{KMatsS} that (Ag) implies the small data global 
existence for \eqref{eq_gene}--\eqref{F_explicit} in two space dimensions 
(see also \cite{KimS}, \cite{Kim}, \cite{LiS} etc., 
for closely related works). 
We note that this condition is motivated by works of Rentaro Agemi in 
the late 1990's. He tried to find a structural condition which covers 
not only the standard null condition but also the wave equations with 
cubic nonlinear damping such as $\Box v=-(\pa_t v)^3$. 
Therefore it would be fair to 
call this the {\em Agemi-type condition}. 
As for the asymptotic behavior of the global solutions under (Ag), 
many interesting problems seem left unsolved. 
To the authors' knowledge, only the following two cases (Ag$_+$) and (Ag$_0$) 
are well-understood: 

\begin{enumerate}
\item[{\bf(Ag$_+$)}] \ 
There exist an ${\mathcal A}(\omega)$ as in (Ag) and 
a positive constant $C$  such that 
\begin{align} 
Y \cdot {\mathcal A}(\omega) F^{\rm red}(\omega, Y)\ge C|Y|^4,
 \quad (\omega, Y)\in \Sph^1 \times \R^N.
\label{agemi_plus}
\end{align}
\end{enumerate}
Note that \eqref{agemi_plus} is equivalent to 
\[
Y \cdot {\mathcal A}(\omega) F^{\rm c, red}(\omega, Y)\ne 0,
 \quad (\omega, Y)\in \Sph^1 \times (\R^N\backslash\{0\}),
\]
if (Ag) is satisfied and $F$ is cubic. 
Under (Ag$_+$), the total energy $\|u(t)\|_{E}$ 
decays like $O((\log t)^{-1/4+\delta})$ as $t\to +\infty$, where
$\delta>0$  can be arbitrarily small. See \cite{KMatsS} for the detail. 

\begin{enumerate}
\item[{\bf (Ag$_0$)}] \ 
There exists an ${\mathcal A}(\omega)$ as in (Ag) 
such that 
\[ 
Y \cdot {\mathcal A}(\omega) F^{\rm red}(\omega, Y)= 0,
 \quad (\omega, Y)\in \Sph^1 \times \R^N.
\]
\end{enumerate}
Note that (Ag$_0$) is stronger than (Ag) if $F$ is cubic 
(while it is equivalent to (Ag) in the quadratic case). 
Roughly speaking, it holds under (Ag$_0$) that 
\[
 \pa u(t,x) \sim |x|^{-1/2}\hat{\omega}(x) V(t;|x|-t,x/|x|)
\]
as $t\to \infty$, where $\hat{\omega}(x)=(-1, x_1/|x|, x_2/|x|)$, and 
$V(t;\sigma,\omega)$ solves 
\[
 \pa_t V=\frac{1}{t} Q(\omega,V)V
\]
with a suitable skew-symmetric matrix $Q$ depending on $(\omega, V)$. 
In particular, decay of the total energy never occurs under (Ag$_0$) 
except for the trivial solution. Typical example satisfying (Ag$_0$) is
 \begin{align*}
\left\{
\begin{array}{l}
 \Box u_1=-  (\pa_t u_1)^2 \pa_t u_2,\\
 \Box u_2= (\pa_t u_1)^3.
 \end{array}\right.
\end{align*}
For more details on (Ag$_0$), 
see \cite{KMatoS}, \cite{K3} and Chapter 10 in \cite{K2}.

Now, let us turn back to our system \eqref{eq}, that is the case where 
$F_1(\pa u)=-(\pa_t u_2)^2 \pa_t u_1$, 
$F_2(\pa u)=-(\pa_t u_1)^2 \pa_t u_2$ and $N=2$ in \eqref{eq_gene}. 
We can easily check that (Ag) is satified by \eqref{eq} with 
${\mathcal A}(\omega)$ being the $2\times 2$ identity matrix. Indeed 
we have 
$Y \cdot F^{\rm red}(\omega, Y)=2Y_1^2Y_2^2$. 
Note also that both (Ag$_+$) and (Ag$_0$) are violated. 
We observe that the system \eqref{eq} possesses two conservation laws 
\begin{align}\label{CL1}
 \frac{d}{dt}\left(
 \|u_1(t)\|_{E}^2+\|u_2(t)\|_{E}^2\right)
=
- 2\int_{\R^2} \bigl(\pa_t u_1(t,x) \bigr)^2 \bigl(\pa_t u_2(t,x) \bigr)^2\, dx
\end{align}
and
\begin{align}\label{CL2}
 \frac{d}{dt}\Bigl( \|u_1(t)\|_{E}^2 - \|u_2(t)\|_{E}^2\Bigr)=0.
\end{align}
However, these are not enough to say something about the large-time 
asymptotics for $u(t)$, and this is not trivial at all. 
To the authors' knowledge, 
there are no previous results which cover the asymptotic behavior of 
solutions to \eqref{eq}--\eqref{data}. 
The aim of the present paper is to address this point. 
Several related issues will be discussed elsewhere. 

The main result is as follows.

\begin{thm}\label{thm_main}
Suppose that $f$, $g\in C_0^{\infty}(\R^2)$ and $\eps$ is suitably small. 
Then the global solution $u(t)$ to \eqref{eq}--\eqref{data} is asymptotically 
free in the following sense: 
there exists $(f^+,g^+) \in \dot{H}^1(\R^2)\times L^2(\R^2)$ such that 
\[
 \lim_{t\to +\infty} \|u(t)-u^+(t)\|_E=0,
\]
where $u^+$ solves the free wave equation $\Box u^+=0$ with 
$(u,\pa_t u)|_{t=0}=(f^+,g^+)$.
\end{thm}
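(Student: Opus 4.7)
The plan is to reduce the system inside the forward light cone to an ODE for the asymptotic profile, exploit the invariants behind (1.5)--(1.6) to show that the profile converges as $\tau \to \infty$, and then lift profile convergence to asymptotic freedom in the energy norm. Global existence and the pointwise decay $|\pa u(t,x)| \lesssim \eps \jb{t+|x|}^{-1/2}\jb{t-|x|}^{-1/2}$ are supplied by \cite{KMatsS}. Passing to hyperbolic coordinates $\tau = t$, $\sigma = r-t$ and using the standard profile ansatz $\pa_a u_j(t,x) \simeq \omega_a r^{-1/2} V_j(\tau, \sigma, x/|x|)$ (with the convention $\omega_0 = -1$), the leading-order balance $\Box u_j \sim F_j(\pa u)$ turns into the reduced ODE $2\pa_\tau V_j = -F_j^{\mathrm{red}}(\omega, V)/\tau$, which for our system reads
\[
 \pa_\tau V_1 = -\frac{V_1 V_2^2}{2\tau}, \qquad \pa_\tau V_2 = -\frac{V_1^2 V_2}{2\tau}.
\]

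Regarding $(\sigma, \omega)$ as a parameter, set $c(\sigma, \omega) := V_1^2 - V_2^2$ and $S(\tau, \sigma, \omega) := V_1^2 + V_2^2$. Then $\pa_\tau c = 0$ (the pointwise counterpart of (1.6)) and $\pa_\tau S = -(S^2 - c^2)/(2\tau) = -2 V_1^2 V_2^2/\tau \le 0$ (the pointwise counterpart of (1.5)). Solving this Riccati-type scalar ODE gives $S(\tau) - |c| \sim \tau^{-|c|}$ when $c(\sigma, \omega) \ne 0$, and $S(\tau) \sim 2/\log \tau$ on the degenerate set $\{c = 0\}$. In either regime
\[
 \int_1^\infty \frac{|V_j V_k^2|}{\tau}\, d\tau < \infty:
\]
in the generic case the minor component of $V$ decays like $\tau^{-|c|/2}$, while on $\{c = 0\}$ one uses $|V_j V_k^2| \lesssim S^{3/2} \lesssim (\log \tau)^{-3/2}$ together with the substitution $s = \log \tau$. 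Hence $\pa_\tau V_j(\cdot, \sigma, \omega) \in L^1([1, \infty))$ and the pointwise limit $V^+(\sigma, \omega) := \lim_{\tau \to \infty} V(\tau, \sigma, \omega)$ exists; combined with $|V(\tau)| \le |V(1)|$ (monotonicity of $S$) and $|V(1, \cdot, \cdot)| \in L^2$ (from $\|u(1)\|_E \lesssim \eps$ and finite propagation), dominated convergence upgrades this to $V(\tau) \to V^+$ in $L^2(\R_\sigma \times \Sph^1_\omega)$.

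To finish, let $u^+$ be the free solution $\Box u^+ = 0$ characterized by the asymptotic relation $\pa_a u^+(t, x) \simeq \omega_a r^{-1/2} V^+(r-t, x/|x|)$ as $t \to \infty$; the standard isometry between such asymptotic profiles and finite-energy Cauchy data produces $(f^+, g^+) \in \dot H^1(\R^2) \times L^2(\R^2)$ with the required initial trace. The conclusion $\|u(t) - u^+(t)\|_E \to 0$ then follows by constructing an approximate solution $u^{\mathrm{app}}$ out of $V(\tau, \cdot, \cdot)$ with smooth cut-offs away from the cone and running energy estimates for $u - u^{\mathrm{app}}$ and $u^+ - u^{\mathrm{app}}$ separately, using the $L^1_\tau$-integrability above together with the dissipation (1.5). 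The main technical obstacle is to justify the profile reduction rigorously, i.e.\ to control the remainder $\pa_a u_j - \omega_a r^{-1/2} V_j$ in $L^2$ uniformly in $t$: this is sharpest on the degenerate set $\{c \equiv 0\}$, where the decay $S \sim 2/\log \tau$ sits exactly on the borderline of integrability and any polynomial loss in the remainder would destroy the convergence.
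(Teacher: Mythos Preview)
Your overall strategy---profile reduction along outgoing rays, analysis of the reduced ODE via the invariant $V_1^2-V_2^2$, pointwise limits upgraded to $L^2$ by dominated convergence, then conversion to energy-norm convergence---is exactly the paper's. The case split according to the sign of $c=V_1^2-V_2^2$ and the $(\log\tau)^{-1/2}$ decay on the degenerate set also match the paper's Cases~1--3.

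The gap is in how you handle the remainders. You analyze the \emph{pure} reduced ODE and then declare the remainder control to be ``the main technical obstacle,'' left unresolved; you even suggest that on $\{c=0\}$ the decay $S\sim 2/\log\tau$ sits on a borderline that a polynomial loss would destroy. This concern is misplaced, and the paper shows why. If you define the profile \emph{exactly} as $V_j(t;\sigma,\omega)=\mathcal{D}\bigl(r^{1/2}u_j\bigr)\big|_{r=t+\sigma}$, then $V$ satisfies
\[
 \partial_t V_1=-\frac{1}{2t}V_1V_2^2+K_1,\qquad
 \partial_t V_2=-\frac{1}{2t}V_1^2V_2+K_2,
\]
where the inhomogeneity $K$ decays \emph{polynomially}, $|K|\lesssim \eps\,t^{2\mu-3/2}\jb{\sigma}^{-\mu-1/2}$, thanks to the identity $\partial_+\partial_-(r^{1/2}\phi)=r^{1/2}\Box\phi+\frac{1}{4}r^{-3/2}(4\Omega^2+1)\phi$ and the a~priori bounds from \cite{KMatsS}. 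Consequently $V_1^2-V_2^2$ is not conserved but has a well-defined limit $m(\sigma,\omega)$ with error $O(t^{2\mu-1/2})$, and the three-case analysis goes through with $m$ in place of your $c$; in particular the $m=0$ case is handled by the differential inequality $\partial_t(V_1^2)\le -t^{-1}V_1^4+C\eps^2 t^{2\mu-3/2}$ and a comparison lemma. Nothing here is borderline: the remainder is integrable with room to spare, uniformly in the sign of $m$. Likewise the discrepancy $\partial u-\hat\omega\,r^{-1/2}V$ carries an extra factor $\jb{t+|x|}^{-1/2}$ (Lemma~2.1), so its $L^2$ norm decays like $(1+t)^{\mu-1/2}$.

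For the final step the paper avoids building an approximate solution $u^{\mathrm{app}}$ and running separate energy estimates. Instead it invokes a characterization due to Katayama: $u$ is asymptotically free if and only if there exists $\Phi\in L^2(\R\times\Sph^1)$ with $\|\partial u(t,\cdot)-\hat\omega\,|x|^{-1/2}\Phi(|x|-t,x/|x|)\|_{L^2}\to 0$. With the exact profile $V$ and its $L^2$ limit $V^+$, this reduces to three pieces (away from the light cone; the pointwise error $\partial u-\hat\omega r^{-1/2}V$; and $\|V(t)-V^+\|_{L^2}$), each of which has already been controlled. This is cleaner than your proposed route and sidesteps the obstacle you were worried about.
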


\begin{rmk} If $(f_1,g_1)=(f_2,g_2)$, then the system is reduced to 
the single euation $\Box v=-(\pa_t v)^3$. 
Therefore we can adapt the result of \cite{KMuS}, \cite{KMatsS} to see that 
the total energy $\|u(t)\|_{E}$ decays like $O((\log t)^{-1/4+\delta})$
as $t\to \infty$. On the 
other hand, if $\|u_1(0)\|_{E}\ne \|u_2(0)\|_{E}$, then 
at least one component $u_1$ or $u_2$ tends to a non-trivial free solution 
because of the conservation law \eqref{CL2}.
\end{rmk}

\begin{rmk}
Our proof of Theorem \ref{thm_main} does not rely on the conservation laws 
\eqref{CL1} and \eqref{CL2} at all. For example, the same proof is valid for 
the system
\begin{align*}
\left\{
\begin{array}{l}
 \Box u_1=- |\nabla_x u_2|^2 \pa_t u_1,\\
 \Box u_2=- |\nabla_x u_1|^2 \pa_t u_2,
 \end{array}\right.
\end{align*}
or more generally, any cubic terms satisfying 
the standard null condition can be added to the right-hand side of it.
\end{rmk}

\begin{rmk}The above theorem concerns only the forward Cauchy problem 
(i.e., for $t>0$). 
For the backward Cauchy problem, it is not difficult to 
construct a blowing-up solution (with a suitable choice of $f$, $g$) 
based on the idea of \cite{Go}. 
This should be contrasted with the behavior of solutions under (Ag$_0$). 
\end{rmk}

\section{Preliminaries}  \label{sec_prelin}

In this section, we collect several notations which will be used in the 
subsequent sections. 
For $z\in \R^d$, we write $\jb{z} =\sqrt{1+|z|^{2}}$.
We define
\begin{align*}
 S := t \pa_{t} + x_{1}\pa_{1} + x_{2}\pa_{2}, \ 
 L_{1} := t \pa_{1} + x_{1} \pa_{t}, \ 
 L_{2} := t \pa_{2} + x_{2} \pa_{t}, \ 
 \Omega := x_1 \pa_2 - x_2 \pa _1,
\end{align*}
and we set
$\Gamma =(\Gamma_{j})_{0 \le j \le 6}
=(S , L_{1} , L_{2} , \Omega , \pa_{0} , \pa_{1} , \pa_{2})$.
For a multi-index 
$\alpha = (\alpha _{0},\alpha _{1},\cdots,\alpha _{6}) \in \Z^{7}_{+}$, 
we write 
$|\alpha |=\alpha _{0}+\alpha _{1}+\cdots+\alpha _{6}$ 
and 
$\Gamma^{\alpha}
=\Gamma_{0}^{\alpha_{0}}\Gamma_{1}^{\alpha_{1}}\cdots \Gamma_{6}^{\alpha_{6}}$, where $\Z_{+}=\{n\in\Z ; n\ge 0\}$.
We define $|\, \cdot\,|_s$ by 
\begin{align*}
 |\phi (t,x)|_{s}=\sum _{|\alpha |\le s}|\Gamma^{\alpha }\phi (t,x)|.
\end{align*}
For $x\in \R^2\backslash\{0\}$, we write 
$r:=|x|$, 
$\omega =(\omega_1,\omega_2):=x/|x|$, 
$\omega^{\perp}=(\omega_1^{\perp},\omega_2^{\perp}):=(-\omega_2,\omega_1)$, 
$\partial _{r}:=\omega _{1}\partial _{1}+\omega _{2}\partial _{2}$, 
and $\pa_{\pm}:=\partial _{t}\pm \partial _{r}$.
Important relations are 
\begin{align}\label{dal_polar}
 \pa_{+}\pa_{-}(r^{1/2}\phi)
 =
 r^{1/2}\Box \phi+\frac{1}{4r^{3/2}}(4\Omega^{2}+1)\phi,
\end{align}
\begin{align}\label{dx_polar}
(t+r)(\pa_j-\omega_j\pa_r)= 
\omega_j^{\perp}(\Omega + \omega_1 L_2 - \omega_2 L_1), 
\qquad j=1,2,
\end{align}
\begin{align}\label{d_plus}
(t+r)\pa_+=S+\omega_1L_1+\omega_2L_2,
\end{align}
and $\pa_+ + \pa_-=2\pa_t$, $\pa_+ - \pa_-=2\pa_r$. 
Next we set 
$\Lambda_{\infty}=\{ (t,x) \in [0,\infty)\times \R^2 ; |x|\geq t/2\geq 1 \}$ 
and $\mathcal{D}=-2^{-1}\pa_{-}$.
Then we have the following. 

\begin{lem}\label{lem1}
There exists a positive constant $C$ such that
\begin{align*}
\left| 
|x|^{1/2}\pa\phi(t,x)-\hat \omega(x) \mathcal{D}\left( |x|^{1/2}\phi(t,x) \right) 
\right|
\le C \jb{t+|x|}^{-1/2}| \phi(t,x) |_1
\end{align*}
for $(t,x)\in \Lambda_\infty$, where 
$\hat{\omega}(x)=(-1,x_1/|x|, x_2/|x|)$.
\end{lem}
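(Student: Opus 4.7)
My plan is to prove the inequality componentwise. Since $\hat\omega(x) = (-1, \omega_1, \omega_2)$ and $\mathcal{D} = -\frac{1}{2}\pa_-$, the vector inside the absolute value has a time entry of the form $r^{1/2}\pa_t\phi + \mathcal{D}(r^{1/2}\phi)$ and two spatial entries $r^{1/2}\pa_j\phi - \omega_j\mathcal{D}(r^{1/2}\phi)$ for $j=1,2$. The first step is to distribute $\pa_-$ across the product $r^{1/2}\phi$ using $\pa_-(r^{1/2}) = -\frac{1}{2}r^{-1/2}$, which isolates a clean $r^{1/2}\pa_-\phi$ contribution and a lower-order remainder of the form $r^{-1/2}\phi$.

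Next, using $\pa_- = \pa_t - \pa_r$ and $\pa_+ = \pa_t + \pa_r$, a short algebraic manipulation reduces the time entry to $\frac{1}{2}r^{1/2}\pa_+\phi + \frac{1}{4}r^{-1/2}\phi$. For each spatial entry I would decompose $\pa_j = \omega_j\pa_r + (\pa_j - \omega_j\pa_r)$; the radial piece combines with the $\pa_-\phi$ term to produce $\frac{\omega_j}{2}\pa_+\phi$, while the tangential piece $(\pa_j - \omega_j\pa_r)\phi$ and a remainder proportional to $r^{-1/2}\phi$ are retained as separate terms.

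The concluding step is to bound the three kinds of resulting terms uniformly by $C\jb{t+r}^{-1/2}|\phi|_1$. Relation \eqref{d_plus} gives $|\pa_+\phi| \leq C(t+r)^{-1}|\phi|_1$, and \eqref{dx_polar} gives the analogous bound $|(\pa_j - \omega_j\pa_r)\phi| \leq C(t+r)^{-1}|\phi|_1$. In $\Lambda_\infty$ one has $r \geq t/2 \geq 1$, so $r \sim t+r$ and $t+r \geq 1$; consequently both $r^{1/2}(t+r)^{-1}$ and $r^{-1/2}$ are bounded by a constant multiple of $\jb{t+r}^{-1/2}$, and the three types of terms all satisfy the required estimate.

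I do not anticipate any genuine obstacle: the lemma is essentially the standard identity expressing that $\pa\phi$ agrees, to leading order, with $\hat\omega$ times a $\pa_-$ derivative along the outgoing cone $t = |x|$. The only care needed is the bookkeeping when expanding $\pa_-(r^{1/2}\phi)$ and the verification, via the geometric conditions defining $\Lambda_\infty$, that $\jb{t+|x|}^{-1/2}$ is indeed the correct weight for absorbing every remainder.
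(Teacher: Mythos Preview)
Your proposal is correct and follows exactly the approach indicated in the paper, which simply states that the lemma is a consequence of \eqref{dx_polar} and \eqref{d_plus} and refers to \cite{KMuS} for details. Your componentwise expansion, the use of $\pa_-(r^{1/2})=-\tfrac{1}{2}r^{-1/2}$, the decomposition $\pa_j=\omega_j\pa_r+(\pa_j-\omega_j\pa_r)$, and the weight comparison $r^{1/2}(t+r)^{-1}\lesssim r^{-1/2}\lesssim \jb{t+r}^{-1/2}$ on $\Lambda_\infty$ are precisely the ingredients that make the reference work.
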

This is a consequence of \eqref{dx_polar} and \eqref{d_plus}. 
See Corollary~3.3 in \cite{KMuS} for more detail of the proof.

\section{The John--H\"ormander reduction}  \label{sec_prof_eq}

In this section, we will make reductions of the problem along the approach exploited in \cite{KMuS}, \cite{KMatoS}, \cite{KMatsS}, \cite{K3}. 
The essential idea goes back to John\cite{J} and H\"ormander \cite{Hor} 
concerning detailed lifespan estimates for quadratic quasilinear wave 
equations in three space dimensions.

Let $u=(u_1,u_2)$ be a smooth solution to \eqref{eq}--\eqref{data} on 
$[0,\infty) \times \R^2$. Since $f$ and $g$ are compactly-supported, we can 
take $R>0$ such that 
$\supp f \cup \ \supp g \subset \{ x \in \R^2 ; |x| \le R \}$.
Then, by the finite propagation property, we have
\begin{align}
 \supp u(t,\cdot) \subset \{ x \in \R^2 ; |x| \le t+R \}
\label{finite_propa}
\end{align}
for $t\ge 0$. 
We define $U=(U_1,U_2)$ by
$U_j(t,x)=\mathcal{D}(|x|^{1/2}u_j(t,x))$, $j=1,2$. 
We also introduce $H=(H_1,H_2)$ by
\[
\begin{array}{l}
H_1
=
\dfrac{1}{2}
\left( r^{1/2}(\pa_t u_2)^2(\pa_t u_1)+\dfrac{1}{t}U_2^2U_1 \right) 
-\dfrac{1}{8r^{3/2}}(4\Omega^2+1)u_1, \\[3mm]
H_2
=
\dfrac{1}{2}
\left( r^{1/2}(\pa_t u_1)^2(\pa_t u_2)+\dfrac{1}{t}U_1^2U_2 \right) 
-\dfrac{1}{8r^{3/2}}(4\Omega^2+1)u_2.
\end{array}
\]
By \eqref{dal_polar}, we have 
\begin{align}\label{profile}
\left\{\begin{array}{l}
\pa_{+} U_1(t,x)=\dfrac{-1}{2t} U_1(t,x) U_2(t,x)^2  + H_1(t,x), \\[3mm]
\pa_{+} U_2(t,x)=\dfrac{-1}{2t} U_1(t,x)^2 U_2(t,x) + H_2 (t,x).
\end{array}\right.
\end{align}
The following lemma tells us that $H$ can be regarded as a remainder 
if we have a good control of $u$ near the light cone.
\begin{lem}
There exists a positive constant $C$ which may depend on $R$ such that
\begin{align}
 |H(t,x)|
\le 
C t^{-1/2}\bigl(|\pa u| +\jb{t+|x|}^{-1}|u|_{1}\bigr)^2 |u|_1
+C t^{-3/2}|u|_2
\label{est_H}
\end{align}
for 
$(t,x)\in\Lambda_{\infty,R}:=
\{(t,x)\in \Lambda_{\infty}\,;\, |x|\le t+R \}$.
\label{remainder}
\end{lem}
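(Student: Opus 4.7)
By the $u_1 \leftrightarrow u_2$ symmetry it suffices to estimate $H_1$. I would first peel off the angular piece: on $\Lambda_{\infty,R}$ one has $r \ge t/2$, so
\[
\left|\frac{1}{8r^{3/2}}(4\Omega^2+1)u_1\right| \le Cr^{-3/2}|u_1|_2 \le Ct^{-3/2}|u|_2,
\]
which accounts for the second term on the right-hand side of \eqref{est_H}. The remaining, cubic part is
\[
G_1 := \frac{1}{2}r^{1/2}(\pa_t u_2)^2\pa_t u_1 + \frac{1}{2t}U_1 U_2^2,
\]
and the goal is to show $|G_1| \le C t^{-1/2}X^2|u|_1$, where throughout I write $X := |\pa u| + \jb{t+r}^{-1}|u|_1$.

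Following the John--H\"ormander reduction, set $E_j := r^{1/2}\pa_t u_j + U_j$; Lemma \ref{lem1} yields $|E_j| \le C\jb{t+r}^{-1/2}|u_j|_1$, and expanding $U_j = -\tfrac{1}{2}r^{1/2}\pa_- u_j + \tfrac{1}{4}r^{-1/2}u_j$ gives $|U_j| \le Cr^{1/2}X$ on $\Lambda_{\infty,R}$. Writing $r^{1/2}(\pa_t u_2)^2\pa_t u_1 = r^{-1}(-U_2+E_2)^2(-U_1+E_1)$ and expanding produces the exact algebraic identity
\[
G_1 = \frac{r-t}{2rt}\,U_1 U_2^2 + \frac{1}{2r}\mathcal{E},
\]
where $\mathcal{E} := 2U_1 U_2 E_2 - U_1 E_2^2 + E_1 U_2^2 - 2 E_1 U_2 E_2 + E_1 E_2^2$ collects every term containing at least one $E$ factor. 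Each of these has at most two $U$'s; combined with the bounds on $U_j$ and $E_j$ above, together with the trivial inequality $|u|_1 \le \jb{t+r}X$ used to absorb leftover copies of $|u|_1$ into powers of $X$, a short calculation yields $|\mathcal{E}/(2r)| \le C t^{-1/2}X^2|u|_1$.

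The hard part is the leftover term $\frac{r-t}{2rt}U_1 U_2^2$: on $\Lambda_{\infty,R}$ one has only $|r-t| \lesssim t$, so a direct use of $|U_j| \le Cr^{1/2}X$ costs a factor of $t^{1/2}$ too many. The rescue is the identity $S - \omega_1 L_1 - \omega_2 L_2 = (t-r)\pa_-$ (the $\pa_-$-counterpart of \eqref{d_plus}, immediate from the definitions in \S\ref{sec_prelin}), which yields the pointwise bound $|(t-r)\pa_- u_j| \le C|u_j|_1$. Feeding this into $U_j = -\tfrac{1}{2}r^{1/2}\pa_- u_j + \tfrac{1}{4}r^{-1/2}u_j$ gives the key estimate
\[
|(t-r)U_j| \le C\bigl(r^{1/2}|u_j|_1 + r^{-1/2}|t-r||u_j|\bigr) \le C\jb{t+r}^{1/2}|u|_1
\]
on $\Lambda_{\infty,R}$. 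Applying this to the $U_1$ factor and the standard bound $|U_2|^2 \le CrX^2$ to the remaining pair then gives
\[
\left|\frac{r-t}{2rt}\,U_1 U_2^2\right| \le \frac{C\jb{t+r}^{1/2}|u|_1}{rt}\cdot rX^2 \le C t^{-1/2}X^2|u|_1,
\]
and the same argument with $u_1,u_2$ swapped handles $H_2$, thereby establishing \eqref{est_H}.
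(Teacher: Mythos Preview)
Your argument is correct. The paper itself does not prove this lemma but simply refers to Lemma~2.8 of \cite{KMatsS}; your self-contained proof---peeling off the angular term, writing $r^{1/2}\pa_t u_j=E_j-U_j$ with $E_j$ controlled by Lemma~\ref{lem1}, and handling the residual $\frac{r-t}{2rt}U_1U_2^2$ via the identity $(t-r)\pa_-=S-\omega_1L_1-\omega_2L_2$---is exactly the standard approach used there, so there is no substantive difference in method.
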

For the proof, see Lemma 2.8 in \cite{KMatsS}.

Next we recall the  basic decay estimates satisfied by 
the global small amplitude solution $u$ to \eqref{eq}--\eqref{data}.
From the argument of Section 3 in \cite{KMatsS}, we already know the 
following.
\begin{lem}
Let $k \geq 4$, $0<\mu <1/10$ and $0<(8k+7)\nu <\mu$. 
Suppose that $\eps$ is suitably small. 
Then the solution $u$ to \eqref{eq}--\eqref{data} satisfies
\begin{align}
|u(t,x)|_{k+1} \le C \eps \jb{t+|x|}^{-1/2+\mu},
\label{apriori2}
\end{align}
\begin{align*}
|\pa u(t,x)|_k \le C \eps \jb{t+|x|}^{-1/2+\nu }\jb{t-|x|}^{\mu-1}
\end{align*}
and
\begin{align}
|\pa u(t,x)| \le C \eps \jb{t+|x|}^{-1/2}\jb{t-|x|}^{\mu-1}
\label{apriori1}
\end{align}
for $(t,x) \in [0,\infty)\times \R^2$, where $C$ is a positive constant 
independent of $\eps$.
\end{lem}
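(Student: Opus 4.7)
The plan is a standard continuity/bootstrap argument combining generalized energy estimates in the Klainerman vector-field framework, Alinhac's ghost-weight method adapted to the Agemi condition (Ag), and pointwise decay for the inhomogeneous 2D wave equation. On a maximal smooth existence interval $[0,T)$ I would post the bootstrap hypothesis
\begin{equation*}
 |u(t,x)|_{k+1}\le A_1\eps\jb{t+|x|}^{-1/2+\mu},\qquad |\pa u(t,x)|\le A_2\eps\jb{t+|x|}^{-1/2}\jb{t-|x|}^{\mu-1}
\end{equation*}
with large absolute constants $A_1,A_2$, and aim to recover the same bounds with $A_j$ replaced by $A_j/2$ for $\eps$ small; the continuation criterion together with \eqref{finite_propa} then extends the estimates to $T=\infty$.

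For the $L^2$ energy bound I would apply $\Gamma^\alpha$ with $|\alpha|\le k+1$ to \eqref{eq}; since $[S,\Box]=-2\Box$ and $L_j,\Omega,\pa_a$ commute with $\Box$, each $\Box\Gamma^\alpha u_j$ is a linear combination of cubic expressions in $\Gamma^\beta\pa u$ with $|\beta|\le|\alpha|$. I would then run the ghost-weight identity by multiplying with $e^{-q(|x|-t)}\pa_t\Gamma^\alpha u_j$ where $q(\sigma)=\int_{-\infty}^\sigma\jb{\tau}^{-1-\delta}d\tau$ for small $\delta>0$. This produces (i) the Alinhac dissipation $\int_0^t\!\int e^{-q}q'(|x|-s)\sum_j|(\pa_t+\pa_r)\Gamma^\alpha u_j|^2 dx\,ds$ on the left, and (ii) on the right the top-order cubic interaction, whose principal part after the substitution $\pa u\sim-\hat\omega\mathcal D(|x|^{1/2}u)/|x|^{1/2}$ from Lemma \ref{lem1} reduces to $\int_{\R^2}\,Y\cdot F^{\mathrm{red}}(\omega,Y)\,dx$ with $Y$ built from $\Gamma^\alpha\pa u$. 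The Agemi condition (Ag), which here reads $Y\cdot F^{\mathrm{red}}(\omega,Y)=2Y_1^2Y_2^2\ge 0$, lets me discard this sign-definite contribution. What remains is bounded by $\int |\pa u|^2 |u|_{k+1}^2\,dx$ plus lower-order ghost-controlled terms; with the bootstrap bound on $|\pa u|$ this is $O(\eps^2\jb{s}^{-1+2\nu})\sum\|\Gamma^\alpha u(s)\|_E^2$ after integrating $\jb{s-r}^{2\mu-2}$ in $r$, and Gronwall gives $\sum_{|\alpha|\le k+1}\|\Gamma^\alpha u(t)\|_E\le C\eps\jb{t}^{C\eps^2}$, which for small $\eps$ is $\le C\eps\jb{t}^\nu$.

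The pointwise bounds then follow from the 2D Klainerman--Sobolev inequality
\begin{equation*}
 \jb{t+|x|}^{1/2}\jb{t-|x|}^{1/2}|\phi(t,x)|\le C\sum_{|\beta|\le 2}\|\Gamma^\beta\phi(t)\|_{L^2},
\end{equation*}
applied to $\Gamma^\alpha u$: on the support $\{|x|\le t+R\}$ one trades $\jb{t-|x|}^{-1/2}$ for $\jb{t+|x|}^\mu$-growth, yielding \eqref{apriori2} and the $|\pa u|_k$ estimate at the $\nu$-level (the good-derivative factor $\jb{t-|x|}^{\mu-1}$ on the bad component $\pa_-$ comes from the ghost-weight gain of step two). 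For the uniform pointwise estimate \eqref{apriori1} without generalized derivatives I would use Duhamel's formula $u=u^{\mathrm{L}}+\Box^{-1}F(\pa u)$: the free part $u^{\mathrm L}$ satisfies $\jb{t+r}^{1/2}\jb{t-r}^{1-\mu}|\pa u^{\mathrm L}|\le C\eps$ by compact support of the data, and the standard weighted $L^\infty$ estimate for the 2D inhomogeneous wave equation (Kubo--Hoshiga-type) reduces the nonlinear contribution to a spacetime integral of $|F(\pa u)|$ weighted by $\jb{s+|y|}^{-1/2}\jb{s-|y|}^{\mu-1}$; the cubic structure together with the bootstrap bounds makes this $O(\eps^3)$, closing the bootstrap with $A_2/2$.

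The main obstacle is that, because (Ag$_+$) fails, the Agemi positivity in step two only permits discarding a non-negative term rather than producing a coercive dissipation proportional to $\|U_1U_2\|_{L^2}^2$; hence one must accept $\jb{t}^\nu$-growth of the top-order energy and carefully balance the $(k+1)$ commutator losses against the ghost-weight gain. This is precisely where the constraint $(8k+7)\nu<\mu<1/10$ enters, and verifying that every instance of the cubic source integrates up to strictly less than the allowed growth is the delicate bookkeeping step of the proof.
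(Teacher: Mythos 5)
First, note that the paper does not prove this lemma itself: it is quoted directly from Section~3 of \cite{KMatsS}, so the relevant comparison is with the argument there. Your overall architecture (continuity argument, vector-field energy estimates with a ghost weight, Klainerman--Sobolev, weighted $L^\infty$ estimates) is the standard and correct skeleton, but you have placed the Agemi condition in the wrong step, and the step where it is actually indispensable is exactly the one your sketch cannot close.

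In the top-order energy estimate the cubic interaction is \emph{not} sign-definite, so it cannot be ``discarded by (Ag)'': writing $X_j=\pa_t\Gamma^\alpha u_j$, the principal part of $\sum_j\int \pa_t\Gamma^\alpha u_j\,\Gamma^\alpha F_j\,dx$ is $-\int\bigl((\pa_tu_2)^2X_1^2+4(\pa_tu_1)(\pa_tu_2)X_1X_2+(\pa_tu_1)^2X_2^2\bigr)dx$, and this quadratic form in $(X_1,X_2)$ has discriminant $12(\pa_tu_1)^2(\pa_tu_2)^2>0$, hence is indefinite. (This is harmless: its coefficients are $O(\eps^2\jb{t}^{-1})$ in $L^\infty$ by the bootstrap bound on $|\pa u|$, so Gronwall alone yields the admissible $\jb{t}^{C\eps^2}\le\jb{t}^{\nu}$ growth --- which is the part of your argument that does work.) The genuine gap is in recovering \eqref{apriori1}. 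Cubic is the critical power in two space dimensions, so your Duhamel step fails: with $|F(\pa u)|\le C\eps^3\jb{s+|y|}^{-3/2}\jb{s-|y|}^{3\mu-3}$, the characteristic integral controlling $\pa u$ behaves like $\int^t \eps^3 s^{-3/2}\cdot s^{1/2}\,ds=\eps^3\log t$, i.e.\ the weighted $L^\infty$ estimate returns a logarithmically divergent bound and the constant $A_2/2$ cannot be recovered; the claim that ``the cubic structure makes this $O(\eps^3)$'' is exactly what needs proof. This is precisely where (Ag) must be used: as in \cite{KMatsS} (and as in Section~\ref{sec_prof_eq} of the present paper), one sets $U_j=\mathcal{D}(|x|^{1/2}u_j)$, derives $\pa_+U=-\tfrac{1}{2t}(\cdots)+H$ along rays, and observes that $\pa_+|U|^2=-\tfrac{1}{t}\,U\cdot F^{\rm red}(\omega,U)+2U\cdot H\le 2|U|\,|H|$ with $|H|$ integrable in $t$; this ODE argument along rays, not Duhamel, gives the uniform bound $|U|\le C\eps\jb{t-|x|}^{\mu-1}$ and hence \eqref{apriori1}. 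Without moving the Agemi positivity from the energy step (where the required positivity is false) to this pointwise step (where it is the whole point), the bootstrap does not close.
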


In what follows, we denote various positive constants by the same letter $C$ 
which may vary from one line to another. 
From \eqref{apriori2}, \eqref{apriori1}, \eqref{est_H} and 
Lemma~\ref{lem1}, we have
\begin{align}
|U(t,x)|
\le 
 \left| |x|^{1/2}\pa u(t,x) \right|
 + 
 \left| |x|^{1/2}\pa u(t,x)-\hat \omega U(t,x) \right|
\le 
C\eps \jb{t-|x|}^{\mu-1}
\label{est_U}
\end{align}
and
\begin{align}
|H(t,x)|
&\le 
C\eps^{2}t^{-1/2}\jb{t+|x|}^{\mu-1}\jb{t-|x|}^{\mu-1} 
+ C\eps t^{-3/2} \jb{t+|x|}^{\mu-1/2} \notag \\
&\le 
C\eps t^{2\mu-3/2}\jb{t-|x|}^{-\mu-1/2}
\label{H}
\end{align} 
for $(t,x) \in \Lambda_{\infty,R}$. 
Remember that the weights $|x|^{-1}$, $t^{-1}$, $(1+t)^{-1}$, 
$\jb{t+|x|}^{-1}$ are equivalent to each other on $\Lambda_{\infty,R}$. 
Indeed we have 
\begin{align*}
\jb{t+|x|}^{-1}
\le 
|x|^{-1}\le2t^{-1}
\le
3(1+t)^{-1}
\le
3(R+2)\jb{t+|x|}^{-1}.
\end{align*}
Now we make the final reduction. We set
\begin{align*}
\Sigma=\{ (t,x)\in[0,\infty)\times\R^2 ; 
|x|\geq t/2=1\ \mbox{or} \ |x|= t/2 \geq 1 \}
\end{align*}
and $t_{0,\sigma}=\max\{ 2 , -2\sigma \}$. 
Then, since 
the half line $\{ (t,(t+\sigma)\omega)\ ;\ t\geq0 \}$ meets  $\Sigma$ at 
the point $(t_{0,\sigma}, (t_{0,\sigma}+\sigma)\omega)$ for each 
$(\sigma,\omega)\in\R\times\Sph^1$, 
we can see that
\begin{align*}
\Lambda_{\infty,R}=\bigcup_{(\sigma,\omega)
\in(-\infty,R]\times\Sph^1} 
\{ \ (t,(t+\sigma)\omega)\ ;\ t\geq t_{0,\sigma} \}.
\end{align*}
We also note that there exists a positive constant 
$c_0$ depending only on $R$ such that
\begin{align}
c_0^{-1}\jb{\sigma} \le t_{0,\sigma}\le c_0 \jb{\sigma}
\label{t_0}
\end{align}
for $\sigma\in (-\infty,R]$. We set 
$V_j(t;\sigma,\omega)=U_j(t,(t+\sigma)\omega)$ 
and 
$K_j(t;\sigma,\omega)=H_j(t,(t+\sigma)\omega)$
for $(t;\sigma,\omega)\in [t_{0,\sigma},\infty)\times \R\times \Sph^1$, 
$j=1,2$. Then we can rewrite \eqref{profile} as 
\begin{align}
\left\{\begin{array}{l}
 \pa_t V_1(t)
  =
  \dfrac{-1}{2t}V_1(t) V_2(t)^2 + K_1(t), \\[3mm]
 \pa_t V_2(t)
 =
 \dfrac{-1}{2t} V_1(t)^2 V_2(t) + K_2(t),
\end{array}\right.
\label{profileV}
\end{align} 
which we call the {\em profile equation}. 
It follows from \eqref{est_U} and \eqref{H} that
\begin{align}\label{V}
|V(t;\sigma,\omega)| 
\le 
C\eps \jb{\sigma}^{\mu-1} 
\end{align}
and
\begin{align}\label{K}
|K(t;\sigma,\omega)| 
\le 
C\eps \jb{\sigma}^{-\mu-1/2} t^{2\mu-3/2} 
\end{align}
for 
$(t,\sigma,\omega)\in [t_{0,\sigma},\infty)\times (-\infty, R]\times \Sph^1$.

At the end of this section, let us summarize what has been done so far. 
By Lemma~\ref{lem1} and \eqref{apriori2}, 
the leading part for $\pa u_j(t,x)$ as $t\to \infty$ could be given by 
$|x|^{-1/2} \hat{\omega}(x)V_j(t;|x|-t,x/|x|)$, and, 
in view of \eqref{profileV}--\eqref{K}, 
the evolution of $V=(V_1,V_2)$ could be characterized by the system 
\begin{align*}
 \pa_t V_1
  =
  \dfrac{-1}{2t}  V_1 V_2^2, \qquad 
 \pa_t V_2
 =
 \dfrac{-1}{2t} V_1^2V_2,
\end{align*} 
up to harmless remainder terms. 
Our strategy of the proof of Theorem~\ref{thm_main} 
consists of two steps: the first is to investigate the asymptotic behavior of 
$V(t;\sigma,\omega)$ as $t\to+\infty$, and the second is to convert it 
into that of $\pa u(t,x)$.
They will be carried out in Sections \ref{sec_asympt} and 
\ref{sec_proof_main}, respectively.

\section{Asymptotics of solutions to the profile equation}  
\label{sec_asympt}

In this section, we focus on large-time behavior of 
$V(t;\sigma,\omega)$ introduced in the previous section. 
The goal here is to show the following.

\begin{prp} \label{prp_asym_prof}
Let $V=(V_j(t;\sigma,\omega))_{j=1,2}$ be as above. 
There exists 
$V^{+}=(V_j^+(\sigma,\omega))_{j=1,2}\in L^2(\R\times\Sph^1)$ such that
\begin{align}
\lim_{t\to\infty}\int_{\R}\int_{\Sph^1}
\left| \chi_t(\sigma)V(t;\sigma,\omega)-V^{+}(\sigma,\omega) \right|^2\, 
dS_{\omega}d\sigma=0,
\label{L2convergence}
\end{align}
where $\chi_{t}:\R\to\R$ is a bump function satisfying 
 $\chi_t(\sigma)=1$ for $\sigma>-t$ and $\chi_t(\sigma)=0$ for $\sigma\le-t$.
\end{prp}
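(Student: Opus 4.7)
My plan is to prove Proposition~\ref{prp_asym_prof} by first establishing pointwise convergence of $V(t;\sigma,\omega)$ as $t\to+\infty$ at every $(\sigma,\omega)$, and then upgrading it to $L^{2}$-convergence via the dominated convergence theorem, with \eqref{V} supplying an integrable majorant. The cutoff $\chi_{t}$ is only a cosmetic device extending $V$ by $0$ outside its natural domain; at any fixed $\sigma$ we have $\chi_{t}(\sigma)=1$ once $t>-\sigma$, so it will not interfere with either limit.

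\textbf{Pointwise convergence.} At each fixed $(\sigma,\omega)$ I would regard \eqref{profileV} as the decoupled pair of scalar linear equations in $t$,
\[
 \pa_{t}V_{j}(t)+a_{j}(t)V_{j}(t)=K_{j}(t),\qquad j=1,2,
\]
with nonnegative coefficient $a_{j}(t):=V_{3-j}(t)^{2}/(2t)\ge 0$. The integrating factor $E_{j}(t,\tau):=\exp\bigl(-\int_{\tau}^{t}a_{j}(s)\,ds\bigr)\in(0,1]$ is monotone decreasing in $t$ and yields the Duhamel formula
\[
 V_{j}(t)=E_{j}(t,t_{0,\sigma})V_{j}(t_{0,\sigma})+\int_{t_{0,\sigma}}^{t}E_{j}(t,\tau)K_{j}(\tau)\,d\tau.
\]
The first summand converges as $t\to\infty$ by monotonicity. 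For the integral, the integrand converges pointwise in $\tau$, and thanks to $E_{j}\le 1$ together with \eqref{K} it is dominated by $|K_{j}(\tau)|\le C\eps\jb{\sigma}^{-\mu-1/2}\tau^{2\mu-3/2}$, which is $\tau$-integrable on $[t_{0,\sigma},\infty)$ because $\mu<1/10<1/4$. Lebesgue's theorem then produces a pointwise limit $V_{j}^{+}(\sigma,\omega):=\lim_{t\to\infty}V_{j}(t;\sigma,\omega)$.

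\textbf{$L^{2}$-convergence.} The uniform bound \eqref{V} gives $|V(t;\sigma,\omega)|\le C\eps\jb{\sigma}^{\mu-1}$ for all admissible $t$, and passing to the limit yields $|V^{+}(\sigma,\omega)|\le C\eps\jb{\sigma}^{\mu-1}$ as well. Consequently,
\[
 \bigl|\chi_{t}(\sigma)V(t;\sigma,\omega)-V^{+}(\sigma,\omega)\bigr|^{2}\le 4C^{2}\eps^{2}\jb{\sigma}^{2\mu-2},
\]
and the right-hand side belongs to $L^{1}(\R\times\Sph^{1})$ since $2\mu-2<-1$. Applying dominated convergence to this sequence, which tends to $0$ almost everywhere by the previous step, delivers \eqref{L2convergence}; as a byproduct, $V^{+}\in L^{2}(\R\times\Sph^{1})$.

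\textbf{Where the difficulty lies.} Conceptually, the challenge is that the system satisfies neither (Ag$_{+}$) nor (Ag$_{0}$), so the asymptotic methods of \cite{KMatsS} and \cite{KMatoS} do not transfer directly. The structural feature that makes the above plan succeed is specific to \eqref{eq}: the cubic in the $V_{j}$-equation is precisely $-V_{j}$ times the nonnegative factor $V_{3-j}^{2}/(2t)$, so each component satisfies a \emph{linear} scalar ODE with nonnegative damping and integrable source, even though the system as a whole is nonlinear. Technically, the only delicate point is passing to the limit $t\to\infty$ under the $\tau$-integral on the set where $\int^{\infty}a_{j}\,ds$ diverges (where the integrating factor collapses to $0$); the universal bound $E_{j}\le 1$ furnishes the dominant needed to treat this regime uniformly.
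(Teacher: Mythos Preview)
Your proof is correct and substantially simpler than the paper's. The key observation you exploit---that each component equation is linear in $V_{j}$ with a \emph{nonnegative} damping coefficient $a_{j}(t)=V_{3-j}(t)^{2}/(2t)$---immediately gives a monotone integrating factor $E_{j}(t,\tau)\in(0,1]$, and then the existence of $\lim_{t\to\infty}V_{j}(t)$ follows from monotonicity of the homogeneous part together with dominated convergence in the Duhamel integral (the moving upper endpoint is harmless once one extends by $\mathbf{1}_{\{\tau\le t\}}$). The $L^{2}$ step, via the majorant $\jb{\sigma}^{2\mu-2}$ inherited from \eqref{V}, is the same in both arguments.

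By contrast, the paper first derives the near-conservation law $V_{1}^{2}-V_{2}^{2}=m(\sigma,\omega)-r(t)$ with $r(t)\to 0$, then splits into the three cases $m>0$, $m<0$, $m=0$ and treats them separately using Lemma~\ref{lem_M} and Lemma~\ref{lem_ODE}. This is heavier, but it buys more than the bare statement of Proposition~\ref{prp_asym_prof}: it shows that at every $(\sigma,\omega)$ at least one component of $V^{+}$ vanishes (indeed $V_{1}^{+}V_{2}^{+}\equiv 0$), and it yields quantitative rates such as $|V_{2}(t)|\le C\eps\jb{\sigma}^{\mu+m/2-1}t^{-m/2}$ when $m>0$. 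Your route avoids the case analysis and the auxiliary lemmas entirely, at the price of not seeing this additional structure; for the proposition as stated, that trade is clearly favorable.
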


Before going into the proof, let us introduce two simple lemmas.

\begin{lem}
\label{lem_M}
Let $C_0>0$, $C_1\ge 0$, $p>1$, $q>1$ and $t_0\ge2$. 
Suppose that $\Phi(t)$ satisfies 
\begin{align*}
\frac{d\Phi}{dt}(t)
\le \frac{-C_0}{t}\left|\Phi(t)\right|^p + \frac{C_1}{t^{q}} 
\end{align*}
for $t\ge t_0$. Then we have
\begin{align*}
\Phi(t)\le \frac{C_2}{(\log t)^{p^{*}-1}}
\end{align*}
for $t\ge t_0$, where $p^{*}$ is the H\"older conjugate of $p$ (i.e., 
$1/p+1/p^{*}=1$), and
\begin{align*}
C_2=\frac{1}{\log 2}\left( (\log t_0)^{p^{*}}\Phi(t_0) 
  + C_1\int_{2}^{\infty}\frac{(\log \tau)^{p^{*}}}{\tau^{q}}d\tau \right) 
  + \left( \frac{p^{*}}{C_0 p} \right)^{p^{*}-1}.
\end{align*}
\end{lem}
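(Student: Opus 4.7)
The plan is as follows. Since $C_2>0$ and $(\log t)^{p^*-1}>0$, the asserted bound is trivial whenever $\Phi(t)\le 0$, so I restrict attention to times at which $\Phi(t)>0$; on this set the hypothesis reads $\Phi'(t)\le -C_0\Phi(t)^p/t + C_1/t^q$. The natural weighted quantity to study is
\[
G(t) := (\log t)^{p^*}\Phi(t),
\]
since the desired inequality $\Phi(t)\le C_2(\log t)^{-(p^*-1)}$ is equivalent to $G(t)\le C_2\log t$.

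Differentiating $G$ and inserting the hypothesis gives
\[
G'(t)\le \frac{1}{t}\bigl[-C_0(\log t)^{p^*}\Phi^p + p^*(\log t)^{p^*-1}\Phi\bigr] + \frac{C_1(\log t)^{p^*}}{t^q}.
\]
The crucial algebraic identity $p(p^*-1)=p^*$ lets me introduce the single variable $Y=(\log t)^{p^*-1}\Phi\ge 0$ so that the bracket collapses to $-C_0Y^p + p^*Y$. An elementary one-variable maximization (equivalently, a Young-type inequality with conjugate exponents $p$ and $p^*$) shows that
\[
-C_0Y^p+p^*Y\le \left(\frac{p^*}{C_0p}\right)^{p^*-1}
\qquad\text{for all }Y\ge 0,
\]
and this is the heart of the argument. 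Consequently,
\[
G'(t)\le \frac{1}{t}\left(\frac{p^*}{C_0 p}\right)^{p^*-1} + \frac{C_1(\log t)^{p^*}}{t^q}.
\]

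Integrating from $t_0$ to $t$ and using $q>1$ to ensure convergence of $\int_2^{\infty}(\log\tau)^{p^*}/\tau^q\,d\tau$, I get
\[
G(t)\le G(t_0) + \left(\frac{p^*}{C_0 p}\right)^{p^*-1}\log t + C_1\int_2^{\infty}\frac{(\log\tau)^{p^*}}{\tau^q}\,d\tau.
\]
Dividing by $(\log t)^{p^*}$, factoring out $(\log t)^{-(p^*-1)}$, and using $\log t\ge \log 2$ (valid since $t\ge t_0\ge 2$) yields precisely the stated bound with $C_2$ as in the statement, remembering that $G(t_0)=(\log t_0)^{p^*}\Phi(t_0)$. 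A small bookkeeping point is that if $\Phi$ vanishes somewhere on $[t_0,t]$, the integration should be restarted at the largest such zero $t_1$ instead of $t_0$; since $G(t_1)=0$, this only strengthens the estimate.

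The main obstacle, such as it is, lies in noticing the correct weight $(\log t)^{p^*}$ together with the exponent identity $p(p^*-1)=p^*$, which is what makes the right-hand side collapse into a polynomial in a single variable $Y$. Once these are recognized, the argument reduces to a scalar maximization followed by a routine integration, and no further analytic input is needed.
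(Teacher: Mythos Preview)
Your argument is correct. The paper itself does not give a proof of this lemma; it simply refers the reader to Lemma~4.1 of \cite{KMatsS}. Your approach---introducing the weighted quantity $G(t)=(\log t)^{p^{*}}\Phi(t)$, using the conjugate-exponent identity $p(p^{*}-1)=p^{*}$ to rewrite the bracket as $-C_0Y^{p}+p^{*}Y$ with $Y=(\log t)^{p^{*}-1}\Phi$, bounding this by its maximum $(p^{*}/(C_0 p))^{p^{*}-1}$, and then integrating---is precisely the standard route and is what one finds in that reference.

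One small simplification is available: the bound
\[
-C_0(\log t)^{p^{*}}|\Phi|^{p}+p^{*}(\log t)^{p^{*}-1}\Phi
\le \left(\frac{p^{*}}{C_0 p}\right)^{p^{*}-1}
\]
actually holds for \emph{every} real value of $\Phi$, not only for $\Phi>0$, since when $\Phi\le 0$ both terms on the left are nonpositive. Hence the differential inequality for $G'(t)$ is valid on all of $[t_0,\infty)$, and the integration can be carried out directly without the sign restriction, the assumption $C_2>0$, or the restart bookkeeping. These extra steps are harmless, just unnecessary.
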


For the proof, see Lemma 4.1 of \cite{KMatsS}. 

\begin{lem}\label{lem_ODE}
Let $t_0\ge 0$ be given. For 
$\lambda$, $Q \in C\cap L^1([t_0,\infty))$, 
assume that $y(t)$ satisfies
\begin{align*}
\frac{dy}{dt}(t)= \lambda(t)y(t) + Q(t)
\end{align*}
for $t\ge t_0$. Then we have 
\[
 |y(t)-y^+|
 \le 
 C_3 \int_t^{\infty} \left( |y^+||\lambda(\tau)|+|Q(\tau)| \right) \, d\tau
\]
for $t\ge t_0$, where 
\[
 C_3=\exp\left(\int_{t_0}^{\infty} |\lambda(\tau)|\, d\tau \right)
\]
and
\[
 y^+=
 y(t_0) e^{\int_{t_0}^{\infty} \lambda(\tau)\, d\tau}
 + 
 \int_{t_0}^{\infty} Q(s) e^{\int_{s}^{\infty} \lambda(\tau)\, d\tau}\, ds.
\]
\end{lem}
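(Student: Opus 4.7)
The plan is to solve the scalar linear ODE explicitly by the integrating-factor method, rearrange the resulting variation-of-constants formula so that $y^+$ appears naturally in the representation, and then read off the required bound via the elementary inequality $|e^z-1|\le |z|e^{|z|}$.

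First I would multiply the equation by $e^{-\int_{t_0}^t \lambda(\tau)\,d\tau}$ and integrate from $t_0$ to $t$ to obtain
\[
 y(t)=y(t_0)e^{\int_{t_0}^t\lambda(\tau)\,d\tau}
 +\int_{t_0}^t Q(s)e^{\int_s^t\lambda(\tau)\,d\tau}\,ds.
\]
Using $\lambda,Q\in L^1([t_0,\infty))$ to justify tail manipulations together with the identity $\int_{t_0}^t\lambda+\int_t^\infty\lambda=\int_{t_0}^\infty\lambda$, multiplication by $e^{\int_t^\infty\lambda(\tau)\,d\tau}$ and comparison with the definition of $y^+$ yields
\[
 y(t)e^{\int_t^\infty\lambda(\tau)\,d\tau}
 =y^+-\int_t^\infty Q(s)e^{\int_s^\infty\lambda(\tau)\,d\tau}\,ds,
\]
so that
\[
 y(t)-y^+
 =y^+\bigl(e^{-\int_t^\infty\lambda(\tau)\,d\tau}-1\bigr)
 -\int_t^\infty Q(s)e^{-\int_t^s\lambda(\tau)\,d\tau}\,ds.
\]

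Next I would estimate each term separately. For the first, applying $|e^z-1|\le|z|e^{|z|}$ with $z=-\int_t^\infty\lambda(\tau)\,d\tau$ produces a bound by $C_3|y^+|\int_t^\infty|\lambda(\tau)|\,d\tau$, since $|z|\le\int_t^\infty|\lambda(\tau)|\,d\tau$ and $e^{|z|}\le C_3$. For the second, the exponential factor satisfies $|e^{-\int_t^s\lambda(\tau)\,d\tau}|\le C_3$ uniformly for $s\ge t$, so that integral is dominated by $C_3\int_t^\infty|Q(s)|\,ds$. Adding the two contributions gives precisely the inequality stated in the lemma.

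I do not anticipate any real obstacle here: the assertion is essentially the standard variation-of-constants formula read backward from $+\infty$, and every exponential factor arising in the rearrangement is controlled by $C_3$. The only mild bookkeeping concerns the sign manipulation that rewrites $e^{\int_s^\infty\lambda}e^{-\int_t^\infty\lambda}$ as $e^{-\int_t^s\lambda}$ and the use of $s\ge t$ when bounding $|z|$ and the latter exponential factor.
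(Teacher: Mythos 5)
Your argument is correct and is essentially the paper's own proof: the authors also write $y(t)=\Phi(t;\infty)y^+-\int_t^\infty\Phi(t;s)Q(s)\,ds$ with $\Phi(t;s)=e^{\int_s^t\lambda}$ and bound $|\Phi(t;\infty)-1|$ and $|\Phi(t;s)|$ exactly as you do. No substantive difference.
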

\begin{proof}Put
\[
 \Phi(t;s)=\exp\left(\int_{s}^{t} \lambda(\tau)\, d\tau \right)
\]
for $s$, $t\in [t_0,\infty]$. Then we see that 
\begin{align*}
 y(t)
 = \Phi(t;t_0) y(t_0) +\int_{t_0}^t \Phi(t;s)Q(s)\, ds
 = \Phi(t;\infty) y^+ -\int_{t}^{\infty} \Phi(t;s)Q(s)\, ds.
\end{align*}
We also note that $|\Phi(s;t)|\le C_3$ and that 
\[
 |\Phi(t;\infty)-1| \le C_3\int_t^{\infty} |\lambda(\tau)|\, d\tau.
\]
Therefore we obtain 
\begin{align*}
 |y(t)-y^+|
 &\le 
  |\Phi(t;\infty)-1||y^+| + \int_t^{\infty} |\Phi(t;s)||Q(s)|\, ds\\
 &\le
 C_3|y^+|\int_t^{\infty} |\lambda(\tau)|\, d\tau
 + 
 C_3\int_t^{\infty} |Q(\tau)|\, d\tau,
\end{align*}
as desired.
\end{proof}

\noindent{\em Proof of Proposition~\ref{prp_asym_prof}.}\ 
We first show the pointwise convergence of $V(t;\sigma,\omega)$ 
as $t\to +\infty$. 
We note that \eqref{finite_propa} implies $V(t;\sigma,\omega)=0$ if 
$\sigma\ge R$. In what follows, 
we fix $(\sigma,\omega)\in  (-\infty,R]\times \Sph^1$ and 
introduce 
\[
 \rho(t)=\rho(t;\sigma,\omega)
 :=
 V_1(t;\sigma,\omega)K_1(t;\sigma,\omega)
 -
 V_2(t;\sigma,\omega)K_2(t;\sigma,\omega)
\]
so that 
\[
 \frac{1}{2} \pa_ t \Bigl((V_1(t))^2- (V_2(t))^2\Bigr)
 =V_1(t)\pa_tV_1(t) -V_2(t)\pa_tV_2(t) 
= \rho(t).
\]
It follows from \eqref{t_0}, \eqref{V} and \eqref{K} that 
\begin{align*}
 \int _{t_{0,\sigma}}^{\infty}\left| \rho(\tau;\sigma,\omega)\right|\, d\tau 
 &\le 
 \int _{t_{0,\sigma}}^{\infty} 
 \sum^2_{j=1} |V_j(\tau;\sigma,\omega)K_j(\tau;\sigma,\omega)|\, 
 d\tau \\
 &\le 
 \int _{t_{0,\sigma}}^{\infty} 
 C \eps^2 \jb{\sigma}^{-3/2}\tau^{2\mu-3/2}\, 
 d\tau \\
 &\le 
 C\eps^2 \jb{\sigma}^{-3/2}(t_{0,\sigma})^{2\mu-1/2}\\
 &\le
 C\eps^2 \jb{\sigma}^{2\mu-2}.
\end{align*}
Therefore we obtain 
\begin{align}\label{1--2}
\left(V_1(t;\sigma,\omega)\right)^2- \left(V_2(t;\sigma,\omega)\right)^2 
&=
\left(V_1(t_{0,\sigma};\sigma,\omega)\right)^2- \left(V_2(t_{0,\sigma};\sigma,\omega)\right)^2
+
2 \int _{t_{0,\sigma}}^{t} \rho(\tau;\sigma,\omega)\, d\tau 
 \notag\\
&=
m(\sigma,\omega)-r(t;\sigma,\omega)
\end{align}
for $t\geq t_{0,\sigma}$, where
\begin{align*}
m=m(\sigma,\omega)
:=
\left( V_1(t_{0,\sigma};\sigma,\omega) \right) ^2
- \left( V_2(t_{0,\sigma};\sigma,\omega)\right)^2 
+2\int_{t_{0,\sigma}}^{\infty}  \rho(\tau;\sigma,\omega)\, d\tau
\end{align*}
and
\begin{align*}
r(t)=r(t;\sigma,\omega):=2\int _{t}^{\infty}\rho(\tau;\sigma,\omega)\, d\tau.
\end{align*}
Note that
\[
 |m|
 \le 
 |V(t_{0,\sigma})|^2
 +C\int _{t_{0,\sigma}}^{\infty}|\rho(\tau)|\, d\tau 
\le 
 C\eps^2 \jb{\sigma}^{2\mu-2} \label{m}
\]
and
\begin{align}
 |r(t)|\le 
C\int _{t}^{\infty} |\rho(\tau)|\, d\tau
\le C\eps^2 \jb{\sigma}^{-3/2}t^{2\mu - 1/2}. 
\label{R(t)}
\end{align}
Now we divide the argument into three cases according to the sign of 
$m(\sigma,\omega)$ as follows.

\begin{itemize}
\item
\underline{\bf Case 1: $m(\sigma,\omega)>0$.} \ 
First we focus on the asymptotics  for $V_2(t)$. 
By \eqref{profileV}, \eqref{V}, \eqref{K}, \eqref{1--2} and \eqref{R(t)}, 
we have 
\begin{align*}
 \pa_t V_2(t)
&=
\frac{-1}{2t} V_2(t)^3 -\frac{m}{2t} V_2(t) + \frac{r(t)}{2t} V_2(t) 
 + K_2 (t)\\
&\le 
\frac{-1}{2t} V_2(t)^3 -\frac{m}{2t} V_2(t) 
 +C\eps \jb{\sigma}^{-\mu-1/2}t^{2\mu-3/2},
\end{align*}
whence
\begin{align*}
\frac{1}{2}\pa_t \left( t^m V_2(t)^2 \right)
&=
 t^mV_2(t)\left(\pa_t V_2(t) +\frac{m}{2t}V_2(t)\right) \\
&\le 
 t^m\left(\frac{-1}{2t} V_2(t)^4 
 + C\eps^2 \jb{\sigma}^{-3/2}t^{2\mu-3/2}\right) \\
&\le 
 C\eps^2 \jb{\sigma}^{-3/2}t^{2\mu +m- 3/2}.
\end{align*}
Integration in $t$ leads to
\begin{align*}
t^mV_2(t)^2 -(t_{0,\sigma})^m V_2(t_{0,\sigma})^2
&\le
C\eps^2 \jb{\sigma}^{-3/2}\int^t_{t_{0,\sigma}}\tau^{2\mu +m- 3/2}d\tau \\
&\le 
C\eps^2 \jb{\sigma}^{-3/2}(t_{0,\sigma})^{2\mu +m- 1/2} \\
&\le C\eps^2 \jb{\sigma}^{2\mu+m-2}
\end{align*}
for $t\ge t_{0,\sigma}$. Therefore we deduce that 
\begin{align}
 |V_2(t)| \le C\eps \jb{\sigma}^{\mu +m/2-1} t^{-m/2}.
\label{decay_V2}
\end{align}
In particular, $V_2(t)\to 0$ as $t\to +\infty$. 
Next we turn our attentions to the asymptotics for $V_1(t)$. 
Since $V_1(t)$ solves $V_1'(t)=\lambda(t) V_1(t) +Q(t)$ with 
$\lambda(t)=-{V_2(t)^2}/{t}$ and $Q(t)=K_1(t:\sigma,\omega)$,
we can apply Lemma~\ref{lem_ODE} to $V_1(t)$. Then we have
\[
 |V_1(t)-W_1^+|
 \le
 C\int_t^{\infty} \left(\frac{|W_1^+| |V_2(\tau)|^2}{\tau}
 + 
 |K_1(\tau)|\right)\, d\tau,
\]
where
\begin{align*}
W_1^{+}
&=W_1^{+}(\sigma,\omega)\\
&:=
V_1(t_{0,\sigma};\sigma,\omega)
e^{-\int_{t_{0,\sigma}}^{\infty} V_2(\tau;\sigma,\omega)^2\frac{d\tau}{\tau}}
+
\int_{t_{0,\sigma}}^{\infty} 
K_1(s;\sigma,\omega)
e^{-\int_s^{\infty} V_2(\tau;\sigma,\omega)^2\frac{d\tau}{\tau}}\, ds.
\end{align*}
By \eqref{V}, \eqref{K} and \eqref{decay_V2}, we have
\begin{align}\label{W}
|W_1^{+}|
&\le 
|V_1(t_{0,\sigma})| 
+ \int_{t_{0,\sigma}}^{\infty} |K_1(s)| ds 
\le 
 C\eps \jb{\sigma}^{\mu -1}
\end{align}
and
\begin{align*}
\int_t^{\infty} \left(\frac{|W_1^+| |V_2(\tau)|^2}{\tau}
 + 
 |K_1(\tau)|\right)\, d\tau
&\le 
C\int_t^{\infty} 
\left(
 \frac{\eps^3 \jb{\sigma}^{3\mu+m-3}}{\tau^{1+m}}
 +
 \frac{\eps \jb{\sigma}^{-\mu-1/2}}{\tau^{3/2-2\mu}}
\right)\, d\tau\\
&\le
\frac{C\eps^3 \jb{\sigma}^{3\mu+m-3}}{mt^{m}}
+
\frac{C\eps \jb{\sigma}^{-\mu-1/2}}{t^{1/2-2\mu}}.
\end{align*}
Therefore we conclude that $V_1(t) \to W_1^{+}$ as $t\to +\infty$.

\item
\underline{\bf Case 2: $m(\sigma,\omega)<0$.}\ 
Similarly to the previous case, we have
\begin{align*}
\lim _{t\to \infty}|V_1(t;\sigma,\omega)|=0,
\qquad 
\lim _{t\to \infty}|V_2(t;\sigma,\omega)-W_2^{+}(\sigma,\omega)|=0, 
\end{align*}
where 
\begin{align*}
W_2^{+}(\sigma,\omega)
:=
V_2(t_{0,\sigma};\sigma,\omega)
e^{-\int_{t_{0,\sigma}}^{\infty} V_1(\tau;\sigma,\omega)^2\frac{d\tau}{\tau}}
+
\int_{t_{0,\sigma}}^{\infty} 
K_2(s;\sigma,\omega)
e^{-\int_{s}^{\infty} V_1(\tau;\sigma,\omega)^2\frac{d\tau}{\tau}}\, ds.
\end{align*}
Remark that $|W_2^+|\le C\eps \jb{\sigma}^{\mu-1}$.

\item
\underline{\bf Case 3: $m(\sigma,\omega)=0$.}\ 
By \eqref{profileV}, \eqref{V}, \eqref{K}, \eqref{1--2} and \eqref{R(t)}, 
we have
\begin{align*}
\pa_t \left( V_1(t)^2\right)
&=\frac{-1}{t} V_1(t)^4 - \frac{r(t)}{t} V_1(t)^2 + 2V_1(t)K_1(t) \\
&\le \frac{-1}{t} (V_1(t)) ^4 + C\eps^2 \jb{\sigma}^{-3/2}t^{2\mu - 3/2}
\end{align*}
for $t \ge t_{0,\sigma}$. 
Thus we can apply Lemma~\ref{lem_M} with $\Phi(t)=V_1(t)^2$ to obtain
\begin{align*}
|V_1(t)|\le \frac{C}{\sqrt{\log t}}\to 0 \qquad (t\to +\infty).
\end{align*}
Also \eqref{1--2} gives us $|V_2(t)|=\sqrt{V_1(t)^2 +r(t)} \to 0$ 
as $t\to \infty$.
\end{itemize}

Summing up the three cases above, we deduce that 
$V(t;\sigma,\omega)$ converges as $t\to +\infty$ 
for each fixed $(\sigma,\omega)\in \R\times\Sph^1$.
In order to show \eqref{L2convergence}, we set
\begin{align*}
&V^{+}_1(\sigma,\omega):=
\left\{\begin{array}{cl}
  W^{+}_1(\sigma,\omega) & \left(m(\sigma,\omega)>0 \right),\\
  0 & \left(m(\sigma,\omega)\le 0 \right),
\end{array}\right.
\\[3mm]
&V^{+}_2(\sigma,\omega):=
\left\{ \begin{array}{cl}
 0 & \left( m(\sigma,\omega) \ge0  \right),\\
 W^{+}_2(\sigma,\omega) & \left( m(\sigma,\omega) < 0 \right),
\end{array}\right.
\end{align*}
and $V^{+}(\sigma,\omega)=(V^{+}_j(\sigma,\omega))_{j=1,2}$ 
for $(\sigma,\omega) \in \R\times\Sph^1$. 
Then, by virtue of \eqref{W}, we have $V^{+}\in L^2(\R\times\Sph^1)$ 
and
\begin{align*}
\left| \chi_t(\sigma)V(t;\sigma,\omega)-V^{+}(\sigma,\omega) \right|^2
\le 
C\eps^2 \jb{\sigma}^{2\mu-2}\in L^1(\R\times\Sph^1)
\end{align*}
for all $t\geq t_{0,\sigma}$. Moreover, it holds that
\[
 \lim_{t\to\infty}
 \left| \chi_t(\sigma)V(t;\sigma,\omega)-V^{+}(\sigma,\omega) \right|^2
 =0
\]
for each fixed $(\sigma,\omega)\in\R\times\Sph^1$. 
Consequently,  Lebesgue's dominated convergence theorem yields 
\eqref{L2convergence}.

\section{Proof of Theorem~\ref{thm_main}}  \label{sec_proof_main}

We are going to prove Theorem~\ref{thm_main}. First we recall the following 
useful lemma.

\begin{lem}[\cite{K1} Theorem 2.1]\label{lem_K}
For $\phi \in C\left( [0,\infty);\dot H^1(\R^2) \right) 
\cap C^1\left( [0,\infty); L^2(\R^2) \right)$, 
the following two assertions $\mathrm {(i)}$ and $\mathrm {(ii)}$ are 
equivalent:
\begin{itemize}	
\item[$\mathrm {(i)}$] 
There exists $(\phi^{+}_0,\phi^{+}_1) \in \dot H^1(\R^2) \times L^2(\R^2)$ 
such that
\begin{align*}
 \lim_{t \to \infty} \| \phi(t)-\phi^{+}(t) \|_E=0,
\end{align*}
where $\phi^{+}\in C\left( [0,\infty);\dot H^1(\R^2) \right) 
\cap C^1\left( [0,\infty);L^2(\R^2) \right)$ is a unique solution to 
$\Box  \phi^{+}=0$, $\phi^{+}(0)=\phi_0^{+}$, $\pa\phi^{+}(0)=\phi_1^{+}$.
\item[$\mathrm {(ii)}$] 
There exists $\Phi=\Phi(\sigma,\omega) \in L^2(\R \times \Sph^1)$ 
such that
\begin{align*}
\lim _{t \to \infty} 
\| \pa \phi (t,\cdot) - \hat \omega(\cdot)\Phi^{\sharp}(t,\cdot) \|_{L^2(\R^2)}=0,
\end{align*}
where $\hat{\omega}(x)=(-1,x_1/|x|, x_2/|x|)$ and 
$\Phi^{\sharp}(t,x)=|x|^{-1/2}\Phi(|x|-t,x/|x|)$. 
\end{itemize}
\end{lem}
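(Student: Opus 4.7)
The plan is to isolate one auxiliary ingredient and then reduce the equivalence to the triangle inequality. The ingredient is the \emph{radiation field theorem} for free 2D waves: for every pair $(\psi_0,\psi_1)\in\dot H^1(\R^2)\times L^2(\R^2)$ the free solution $\psi$ of $\Box\psi=0$ with $(\psi,\pa_t\psi)|_{t=0}=(\psi_0,\psi_1)$ admits a unique radiation profile $\Psi\in L^2(\R\times\Sph^1)$ such that
\[
\lim_{t\to\infty}\bigl\|\pa\psi(t,\cdot)-\hat\omega(\cdot)\Psi^{\sharp}(t,\cdot)\bigr\|_{L^2(\R^2)}=0,
\]
and the resulting linear map $(\psi_0,\psi_1)\mapsto\Psi$ is a bijection from $\dot H^1(\R^2)\times L^2(\R^2)$ (with the energy seminorm) onto $L^2(\R\times\Sph^1)$.

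Granting this, both implications collapse to one-line triangle-inequality arguments. For (i) $\Rightarrow$ (ii), let $\phi^+$ be the free solution from (i) and let $\Phi$ be its radiation profile. Since $\sqrt{2}\,\|\phi-\phi^+\|_E=\|\pa\phi-\pa\phi^+\|_{L^2}$, the bound
\[
\bigl\|\pa\phi(t)-\hat\omega\Phi^{\sharp}(t)\bigr\|_{L^2}
\le \sqrt{2}\,\|\phi(t)-\phi^+(t)\|_E+\bigl\|\pa\phi^+(t)-\hat\omega\Phi^{\sharp}(t)\bigr\|_{L^2}
\]
yields (ii), the first term vanishing by (i) and the second by the defining property of $\Phi$. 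For (ii) $\Rightarrow$ (i), surjectivity of the radiation map produces a free solution $\phi^+$ whose radiation profile coincides with the given $\Phi$; then
\[
\sqrt{2}\,\|\phi(t)-\phi^+(t)\|_E
\le \bigl\|\pa\phi(t)-\hat\omega\Phi^{\sharp}(t)\bigr\|_{L^2}+\bigl\|\hat\omega\Phi^{\sharp}(t)-\pa\phi^+(t)\bigr\|_{L^2}
\]
gives (i).

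The real content, and the main obstacle, is the radiation field theorem itself. I would establish it by writing $\psi$ through its Fourier representation
\[
\psi(t,x)=(2\pi)^{-2}\int_{\R^2}e^{ix\cdot\xi}\Bigl(\cos(t|\xi|)\widehat{\psi_0}(\xi)+\frac{\sin(t|\xi|)}{|\xi|}\widehat{\psi_1}(\xi)\Bigr)\,d\xi,
\]
performing a stationary-phase analysis on the region $|x|\to\infty$ with $\sigma=|x|-t$ bounded, and identifying $\Psi(\sigma,\omega)$ (up to an explicit constant) as a one-dimensional half-Fourier transform in $\rho>0$ of the data restricted to the ray $\xi=\rho\omega$; Plancherel then yields the $L^2$-norm identity and the inversion formula yields surjectivity. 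A more geometric route is to invoke the Lax--Phillips translation representation for the 2D wave equation, which furnishes the same bijection abstractly. In either approach, the pointwise asymptotic is first verified on a dense subclass such as data with compactly supported Fourier transforms bounded away from the origin and then extended by density using the uniform $L^2$ control, in parallel to the identity that underlies Lemma~\ref{lem1}.
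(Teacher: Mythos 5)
The paper does not prove this lemma at all: it is imported verbatim from Katayama \cite{K1} (Theorem 2.1; see also \cite{K2}), so there is no internal argument to compare against line by line. That said, your reduction is exactly the right one, and it is the same mechanism underlying the cited result: the entire content is the Friedlander radiation field (translation representation) theorem for the free wave equation in two space dimensions, i.e.\ that the map sending finite-energy data to the profile $\Psi\in L^2(\R\times\Sph^1)$ with $\pa\psi(t)\sim\hat\omega\,\Psi^{\sharp}(t)$ in $L^2$ is well defined, isometric (note $|\hat\omega(x)|^2=2$ and $\|\psi\|_E^2=\tfrac12\|\pa\psi\|_{L^2}^2$, so in fact $\|\Psi\|_{L^2(\R\times\Sph^1)}=\|\psi\|_E$), and surjective onto $L^2(\R\times\Sph^1)$; granting this, both implications are precisely your triangle inequalities, with the constant $\sqrt2$ handled correctly. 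Two points about the part you only sketch, which is where all the work lives. First, the density argument does close, because the two families of operators involved are uniformly bounded: $\|\pa\psi(t)\|_{L^2}=\sqrt2\,\|\psi\|_E$ by energy conservation, and $\|\hat\omega\Psi^{\sharp}(t)\|_{L^2(\R^2)}^2=2\int_{-t}^{\infty}\int_{\Sph^1}|\Psi(\sigma,\omega)|^2\,dS_\omega d\sigma\le 2\|\Psi\|_{L^2}^2$ since the Jacobian $r\,dr$ cancels the $|x|^{-1/2}$ weight. Second, surjectivity is the one step where ``the inversion formula'' needs genuine care in even dimensions: for fixed $\omega$ the profile $\Psi(\cdot,\omega)$ arises as a half-line Fourier transform in $\rho>0$, and the range of such a transform applied to a single complex amplitude is only a Hardy-type subspace of $L^2(\R_\sigma)$; it is the presence of the two independent real data $|\xi|\widehat{\psi_0}$ and $\widehat{\psi_1}$ on each ray, combined with the Hermitian symmetry of the Fourier transforms of real functions, that makes the map onto all real-valued $L^2(\R\times\Sph^1)$ (this is also where the degrees of freedom match up: two real functions on $\R^2$ versus one real function on $\R\times\Sph^1$). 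This is classical (Friedlander; Lax--Phillips), so your proposal is correct, but as written it defers rather than supplies the actual content of the lemma; to be self-contained you would need to carry out the stationary-phase computation and the surjectivity argument just described.
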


By virtue of this lemma, 
to prove that $u_1$ is asymptotically free, it is sufficient to 
show 
\begin{align}\label{key}
\lim _{t \to \infty} 
\| \pa u_1 (t,\cdot) - \hat \omega(\cdot)V^{+,\sharp}_1(t,\cdot) \|_{L^2(\R^2)}
=0
\end{align}
for $V_1^{+}(\sigma,\omega)$ obtained in Section \ref{sec_asympt}. 
To prove \eqref{key}, we split 
\begin{align*}
\| 
 \pa &u_1 (t,\cdot) - \hat \omega(\cdot)V^{+,\sharp}_1(t,\cdot) 
\|_{L^2(\R^2)}^2
\\ 
=&
\int_{\R^2}| \pa u_1 (t,x) - \hat \omega(x)|x|^{-1/2}V^{+}_1(|x|-t,x/|x|) |^2dx 
\\
\le& 
2\int_{\R^2 \setminus\Lambda_{\infty}} 
| \pa u_1 (t,x) - \hat \omega(x)|x|^{-1/2}V_1(t;|x|-t,x/|x|) |^2 \, dx 
\\
& +2\int_{\Lambda_{\infty}}
| \pa u_1 (t,x) - \hat \omega(x)|x|^{-1/2}V_1(t;|x|-t,x/|x|) |^2 \, dx
\\
& +2\int_0^{\infty}\int_{\Sph^1} 
 | \hat \omega(r\omega)V_1(t;r-t,\omega) 
   - \hat \omega(r\omega)V_1^{+}(r-t,\omega) |^2 \, 
dS_{\omega}dr \\
=:& J_1(t)+J_2(t)+J_3(t).
\end{align*}
To show the decay for $J_1(t)$, 
we note that 
$\jb{t+|x|} \le C \jb{t-|x|}$ on $\R^2 \setminus \Lambda_{\infty}$. 
Then $(\ref{apriori2})$ and $(\ref{V})$ imply
\begin{align*}
J_1(t)
&\le 
C\eps^2\int_{\R^2 \setminus\Lambda_{\infty}}
\Bigl(\jb{t-|x|}^{-1} \jb{t+|x|}^{2\mu -2}
+|x|^{-1}\jb{t-|x|}^{2\mu-2}\Bigr)\, dx 
\\
&\le 
C\eps^2\int_{\R^2 \setminus\Lambda_{\infty}}|x|^{-1} \jb{t+|x|}^{2\mu -2}\, dx 
\\
&\le 
C\eps^2\int_{0}^{\infty}\int_{\Sph^1} (1+t+r)^{2\mu -2}dS_{\omega}\, dr 
\\
&\le 
C\eps^2 (1+t)^{2\mu-1}.
\end{align*}
As for $J_2(t)$, 
we see from Lemma \ref{lem1} and \eqref{apriori2} that 
\begin{align*}
J_2(t)
&= 
2\int_{\Lambda_{\infty}}|x|^{-1}\left| |x|^{1/2}\pa u_1(t,x)
-\hat \omega(x)\mathcal{D}\left(|x|^{1/2}u_1(t,x)\right)\right|^2 \, dx 
\\
&\le 
C\int_{\Lambda_{\infty}}|x|^{-1}\jb{t+|x|}^{-1}|u(t,x)|_1^2 
\, dx 
\\
&\le 
C\eps^2\int_{\R^2}|x|^{-1}\jb{t+|x|}^{2\mu-2}\, dx 
\\
&\le 
C\eps^2 (1+t)^{2\mu-1}.
\end{align*}
Finally, it follows from \eqref{L2convergence} that
\begin{align*}
J_3(t)
&\le 
C\int_0^{\infty}\int_{\Sph^1} 
\left| V_1(t;r-t,\omega) - V_1^{+}(r-t,\omega) \right|^2
\, dS_{\omega}dr 
\\
&\le 
C\int_{-t}^{\infty}\int_{\Sph^1} 
\left| V_1(t;\sigma,\omega) - V_1^{+}(\sigma,\omega) \right|^2
\, dS_{\omega}d\sigma
 \\
&\le 
C\int_{\R}\int_{\Sph^1}\left|\chi_t(\sigma)V_1(t;\sigma,\omega) 
- V_1^{+}(\sigma,\omega) \right|^2
\, dS_{\omega}d\sigma\\
& \to 0
\end{align*}
as $t\to\infty$. 
Piecing them together, we arrive at \eqref{key}. Similarly we have
\begin{align*}
\lim _{t \to \infty} 
\| 
\pa u_2 (t,\cdot) - \hat \omega(\cdot)V^{+,\sharp}_2(t,\cdot) 
\|_{L^2(\R^2)}=0,
\end{align*}
where $V_2^+$ is from Proposition~\ref{prp_asym_prof}. 
With the aid of Lemma~\ref{lem_K}, 
we conclude that $u_2$ is also asymptotically free.

\medskip
\subsection*{Acknowledgments}
The authors would like to thank Professor Soichiro Katayama, Dr.Yuji Sagawa 
and Daisuke Sakoda for their useful conversations on this subject. 
The work of H.~S. is supported by Grant-in-Aid for Scientific Research (C) 
(No.~17K05322), JSPS.


\end{document}